\documentclass[12pt]{amsart} 
\usepackage{amsmath,amsthm,amssymb,graphicx,marvosym,lipsum}
\usepackage{lipsum}
  
\newtheorem{theorem}{Theorem}[section]
\newtheorem{Lemma}[theorem]{Lemma}

\newtheorem{Corollary}[theorem]{Corollary}
\newtheorem{Definition}[theorem]{Definition}
\newtheorem{Example}[theorem]{Example}
\newtheorem{Remark}[theorem]{Remark}

\newcommand\blfootnote[1]{%
  \begingroup
  \renewcommand\thefootnote{}\footnote{#1}%
  \addtocounter{footnote}{-1}%
  \endgroup
}
\begin{document}
\title{Topological Stability in Paired Dynamical Systems}   
\author[]{Abdul Gaffar Khan$^{2,*}$, Pramod Kumar Das$^{3}$, Tarun Das$^{1}$}                 

\begin{abstract}
We study the classical topological dynamical notions of shadowing and topological stability from a viewpoint of paired dynamical system $(f,g)$, where $f$ and $g$ are uniform equivalences on a metric space $X$. We observe that if $g$ is equicontinuous and commutes with $f$, then the study of shadowing for $(f,g)$ is reduced to the study of the classical shadowing for $g^{-1}f$. The fact that these assumptions are sufficient is justified through examples. Finally, we prove that if $f$ is an expansive homeomorphism on a relatively compact metric space, then any pair $(f,g)$ with shadowing is topologically stable.     
\end{abstract}
\maketitle
\begin{flushleft}
\textit{Key words and phrases.} Expansivity, Shadowing, Transitivity, Topological Stability \\
\textit{2020 Mathematics Subject Classification.} Primary 37B25; Secondary 37B65
\end{flushleft}

\blfootnote{\hspace*{-0.5cm} Abdul Gaffar Khan, gaffarkhan18@gmail.com \\
Pramod Kumar Das, pramodkumar.das@nmims.edu\\
Tarun Das, tarukd@gmail.com\\
\textit{$^{*}$Corresponding author}\\ 
\textit{$^{1}$Department of Mathematics, Faculty of Mathematical Sciences, University of Delhi, Delhi, India.}\\ 
\hspace*{0.11cm}\textit{$^{2}$Kirori Mal College, Department of Mathematics, University of Delhi, Delhi-110007.}\\
\hspace*{0.11cm}\textit{$^{3}$School of Mathematics, Applied Statistics and Analytics,  Narsee Monjee Institute of Management Studies, Vile Parle, Mumbai-400056,  India.}  
}
\section{Motivation and Preliminaries}
In the middle of the twentieth century, Utz defined a self-homeomorphism $f$ on a metric space $(X, d)$ to be unstable if there exists a positive constant $\mathfrak{c}$ such that for every pair of distinct points $x,y\in X$, there exists an $n\in\mathbb{Z}$ satisfying $d(f^n(x),f^n(y))>\mathfrak{c}$ \cite{UU}. Such homeomorphisms are known as expansive homeomorphisms in the current literature. The expansivity of symbolic flows has been recognized several years before the above formal introduction. Since any two distinct orbits cannot remain close to each other for all time in any expansive system, it can be seen as a stronger form of sensitivity, an important component of chaos. 
\par
\vspace*{0.1cm}

Recall that a self-homeomorphism $f$ on a metric space $(X, d)$ is said to have shadowing property, if for every $\epsilon>0$, there exists a $\delta>0$ such that for every sequence $\lbrace x_n\rbrace_{n\in\mathbb{Z}}$ satisfying $d(f(x_n),x_{n+1})\leq \delta$, for each $n\in\mathbb{Z}$, there exists an $x\in X$ satisfying $d(f^n(x),x_n)\leq \epsilon$, for each $n\in\mathbb{Z}$. In modern days, one uses computer to study extremely complex dynamical systems. This computer simulation provides us with approximate orbit instead of actual orbit. The presence of shadowing guarantees that the behaviour of an actual orbit must reflect into the nearby approximate orbit. One can refer \cite{AHT} for detailed study of expansivity and shadowing property.    
\par
\vspace*{0.1cm}

Throughout this paper $(X,d)$, $(Y, p)$ denotes metric spaces and $Id$ denotes the identity map on $X$. We assume that every map $f:X\rightarrow X$ is a uniform equivalence i.e., both $f$ and $f^{-1}$ are uniformly continuous. We say that $(X,(f,g))$ is a paired dynamical system or simply $(f,g)$ is a paired system on $X$ whenever $f$ and $g$ are uniform equivalences on $X$. The closed balls and open balls centred at $x$ of radius $\delta$ are denoted by $B[x, \delta]$ and $B(x, \delta)$ respectively. A metric space $X$ is said to be relatively compact if every bounded subset of $X$ is contained in a compact set which is equivalent to saying that every closed ball of finite radius in $X$ is compact. 

\begin{Definition}
Let $(f,g)$ be a paired system on $X$. Then a sequence $\lbrace x_{n} \rbrace_{n\in \mathbb{Z}}$ of elements of $X$ is said to be a $\delta$-pseudo orbit of $(f,g)$ if $d(f(x_{n}), g(x_{n+1}))$ $\leq \delta$, for each $n\in \mathbb{Z}$. Further, a $\delta$-pseudo orbit is said to be $\epsilon$-traced if there exists an $x\in X$ such that $d(f^{n}(x),g^{n}(x_n))\leq \epsilon$, for each $n\in \mathbb{Z}$. We say that $(f,g)$ has paired shadowing property (PSP) if for each $\epsilon >0$, there exists a $\delta >0$ such that each $\delta$-pseudo orbit of $(f, g)$ can be $\epsilon$-traced by some point of $X$. 
\label{D1}
\end{Definition}

The observation discussed below has motivated us to introduce paired shadowing property. This discussion also expresses the connections between paired shadowing and classical shadowing. 
\par
\vspace*{0.1cm}

For $A\subset X\times X$, set $A_{P_{1}} = \lbrace x \mid (x, y)\in A\rbrace$ i.e., the image of the projection map from $A$ to the first coordinate of $A$ and set $A_{P_{2}} = \lbrace y \mid (x, y)\in A\rbrace$ i.e., the image of the projection map from $A$ to the second coordinate of $A$. 
For a homeomorphism $h$ on $X$, $G(h) = \lbrace (x, h(x)) \mid x\in X \rbrace$ denotes the graph of $h$. 
Further, assume that $x\in X$, $\delta > 0$ and $n\in \mathbb{Z}$. Denote the following
\begin{center}
\hspace{0.4cm}$B[h(x), \delta] = x^{+}(\delta) = \lbrace z\in (G(Id))_{P_{1}} \mid  Id(z) \in (G(Id))_{P2} \cap B[h(x), \delta] \rbrace, $ \\ 
$h^{-1}(B[x,\delta]) = x^{-}(\delta) = \lbrace z\in (G(h))_{P_{1}} \mid h(z)\in (G(h))_{P2} \cap B[Id(x), \delta] \rbrace$, \\ 
$h^{-n}(B[x,\delta]) = h_{x}^{n}(\delta) = \lbrace z\in (G(h^{n}))_{P_{1}} \mid  h^{n}(z) \in (G(h^{n}))_{P2} \cap B[(Id)^{n}(x), \delta] \rbrace.$
\end{center}
Note that the notion of shadowing of a homeomorphism $f$ can be stated as for every $\epsilon>0$, there exists a $\delta>0$ such that whenever $\lbrace x_n\rbrace_{n\in\mathbb{Z}}$ satisfies $x_{n}\in x_{n-1}^{+}(\delta)\cap x_{n+1}^{-}(\delta)$, for each $n\in \mathbb{Z}$, we have $\bigcap\limits_{n\in\mathbb{Z}}f_{x_{n}}^{n}(\epsilon)\neq \phi$. This observation directs the dependence of shadowing on the graph of identity map and its self composition which is a natural motivation to study the variant of this dynamical notion by replacing the graph of identity map by a uniform equivalence $g$, which turns out to be equivalent to Definition \ref{D1}. In  words, PSP requires that the $n^{th}$ time orbit under $g$ of the approximated $n^{th}$ time piece can be seen closely to $n^{th}$ time position of a true orbit under $f$. 
\par
\vspace*{0.1cm}

Since the composition of any homeomorphism on the real line having shadowing with its inverse does not have the shadowing, we can conclude that the composition of two maps with shadowing need not have shadowing. The study of shadowing for paired systems indicates where such possibilities could occur. 
\par
\vspace*{0.1cm}

Recall that $f\times g$ has shadowing if and only if $f$ and $g$ have shadowing property. But we can not say that the paired system $(f, g)$ has shadowing if and only if $f$ and $g$ has shadowing (please see Example \ref{E1}). Thus the study of paired system $(f, g)$ and the cartesian system $f\times g$ can be used to obtain distinct information about the effect of dynamical behaviour of one system on another.
\par
\vspace*{0.1cm}

Motivated from the above observations, we introduce paired dynamical systems and explore the study of the relative behaviour of uniform equivalences via various dynamical notions in greater details.
\par
\vspace*{0.1cm}

We say that a paired system $(f, g)$ on $X$ is commutative if $f\circ g = g\circ f$. Note that $(f, g)$ is commutative if and only if $(f, g^{-1})$ is commutative  if and only if $(f^{-1}, g)$ is commutative if and only if $(f^{-1}, g^{-1})$ is commutative. 
\par
\vspace*{0.1cm}

Recall that a homeomorphism $f$ on $X$ is equicontinuous if for each $\epsilon > 0$, there exists a $\delta > 0$ such that if $x,y\in X$ satisfy $d(x, y) < \delta$, then $d(f^{n}(x), f^{n}(y)) < \epsilon$, for each $n\in \mathbb{Z}$.

\section{Paired Shadowing Property}
In this section, we discuss the relation between paired shadowing property of paired system $(X, (f, g))$ and the classical shadowing of autonomous system $(X, g^{-1}f)$. Then several properties of paired system with paired shadowing is proved. Firstly, we need the following definition.

\begin{Definition}
We say that paired systems $(X,(f_{1},g_{1}))$ and $(Y,(f_{2},g_{2}))$ are uniformly conjugate if there exists a uniform equivalence $\Phi : X\rightarrow Y$ such that $\Phi\circ f_{1} = f_{2}\circ \Phi$ and $\Phi\circ g_{1} = g_{2}\circ \Phi$. We say that a property of a paired system is a uniform dynamical property if it is preserved under uniform conjugacy i.e. if $(X, (f,g))$ has property $P$, then every uniformly conjugated system to $(X,(f,g))$ has property $P$.  
\label{D3}
\end{Definition}

\begin{theorem}
Let $(f,g)$ be a commutative paired system on $X$ and $g$ be equicontinuous. Then $(f, g)$ has paired shadowing if and only if $g^{-1}f$ has shadowing.
\label{T1}
\end{theorem}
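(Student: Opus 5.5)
Set $h = g^{-1}f$. Since $f$ and $g$ commute, so do $f$ and $g^{-1}$, and hence $h^n = g^{-n}f^n$ for every $n\in\mathbb{Z}$. The plan is to build an explicit dictionary between pseudo orbits of $(f,g)$ and pseudo orbits of $h$, and between tracing in the two senses. Equicontinuity of $g$ lets us pass freely between distances measured before and after applying powers of $g$.

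\emph{Step 1 (pseudo orbits).} Let $\{x_n\}$ be a sequence in $X$. Note that $g(h(x_n)) = f(x_n)$, so $d(f(x_n),g(x_{n+1})) = d(g(h(x_n)),g(x_{n+1}))$. Because $g$ and $g^{-1}$ are uniformly continuous, for every $\eta>0$ there is $\eta'>0$ with two implications: if $d(h(x_n),x_{n+1})\le \eta'$ then $d(f(x_n),g(x_{n+1}))\le \eta$, and if $d(f(x_n),g(x_{n+1}))\le \eta'$ then $d(h(x_n),x_{n+1})\le \eta$. Thus the same sequence $\{x_n\}$ is a pseudo orbit for both systems, with the constants transformed by moduli of uniform continuity. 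No change of sequence is needed here.

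\emph{Step 2 (tracing).} Since $f^n = g^n h^n$, we have $d(f^n(x), g^n(x_n)) = d(g^n(h^n(x)), g^n(x_n))$. Equicontinuity of $g$ over all $n\in\mathbb{Z}$ gives the first direction: for every $\epsilon>0$ there is $\epsilon_1>0$ such that $d(h^n(x),x_n)\le \epsilon_1$ for all $n$ implies $d(f^n(x),g^n(x_n))<\epsilon$ for all $n$. The reverse direction also uses equicontinuity, applied to the points $g^n(h^n(x))$ and $g^n(x_n)$ with the iterate $g^{-n}$. Since the family $\{g^m\}_{m\in\mathbb{Z}}$ is uniformly equicontinuous, there is $\epsilon_2>0$ such that $d(f^n(x),g^n(x_n))\le \epsilon_2$ for all $n$ implies $d(h^n(x),x_n)<\epsilon$ for all $n$. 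Replacing strict and non-strict inequalities costs only halving the constants.

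\emph{Step 3 (assembly).} Suppose $h$ has shadowing and $\epsilon>0$ is given.
\begin{itemize}
\item Take $\epsilon_1$ from Step 2.
\item Take $\delta_h$ from the shadowing of $h$ for the accuracy $\epsilon_1$.
\item Take $\delta$ so that any $\delta$-pseudo orbit of $(f,g)$ is a $\delta_h$-pseudo orbit of $h$, as in Step 1.
\end{itemize}
Then the point that $\epsilon_1$-traces the sequence under $h$ also $\epsilon$-traces it for $(f,g)$. The converse is symmetric: start from the paired shadowing constant for accuracy $\epsilon_2$ and apply the other implications of Steps 1 and 2.

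The only delicate point is Step 2. There one must make sure that equicontinuity of $g$ is genuinely two-sided in $n$, as in the paper's definition, so that it controls $g^{-n}$ uniformly as well. One must also make sure commutativity is what yields $h^n = g^{-n}f^n$. Beyond that, the argument is a matter of bookkeeping with the moduli of continuity.
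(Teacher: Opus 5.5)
Your proposal is correct and follows essentially the same route as the paper: the same sequence serves as a pseudo orbit for both systems via uniform continuity of $g$ (and $g^{-1}$), and tracing transfers through $f^n = g^n (g^{-1}f)^n$ using the two-sided equicontinuity of $g$. Commutativity is used only to obtain $(g^{-1}f)^n = g^{-n}f^n$. The paper writes out the direction from paired shadowing to shadowing of $g^{-1}f$ and calls the converse similar, while you write out the other direction, but the bookkeeping is identical.
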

\begin{proof}
Assume that $(f, g)$ has paired shadowing. For a given $\epsilon > 0$, choose $0 < \eta < \epsilon$ such that if $x,y\in X$ satisfy $d(x, y)< \eta$, then $d(g^{n}(x), g^{n}(y)) < \epsilon$, for each $n\in \mathbb{Z}$. Choose a $\gamma > 0$ such that each $\gamma$-pseudo orbit of $(f, g)$ can be $\eta$-traced through $(f, g)$. Choose a $\delta > 0$ such that if $x,y\in X$ satisfy $d(x, y)< \delta$, then $d(g(x), g(y)) < \gamma$. One can observe that each $\delta$-pseudo orbit $\lbrace x_{n}\rbrace_{n\in \mathbb{Z}}$ of $g^{-1}f$ is a $\gamma$-pseudo orbit of $(f, g)$. Since $(f, g)$ has  paired shadowing, we get that every $\eta$-tracing point of $\lbrace x_{n}\rbrace_{n\in \mathbb{Z}}$ through $(f, g)$ is an $\epsilon$-tracing point of $\lbrace x_{n}\rbrace_{n\in \mathbb{Z}}$ through $g^{-1}f$ implying that $g^{-1}f$ has shadowing. Similarly, we can prove that the converse holds.
\end{proof}

\begin{Corollary}
Let $(f,g)$ be a commutative paired system on a compact metric space $X$. If $g$ is equicontinuous, then $(f, g)$ has paired shadowing if and only if $(f^{k}, g^{k})$ has paired shadowing, for some $k\in \mathbb{Z}\setminus \lbrace 0\rbrace$.
\label{C1}
\end{Corollary}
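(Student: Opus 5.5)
Via Theorem \ref{T1}, we turn both sides into statements about classical shadowing of single homeomorphisms. The pair $(f^{k},g^{k})$ is again commutative, and $g^{k}$ is equicontinuous: the family $\{(g^{k})^{n}\}_{n\in\mathbb{Z}}$ is a subfamily of $\{g^{n}\}_{n\in\mathbb{Z}}$, so the $\delta$ that works for $g$ also works for $g^{k}$. Applying Theorem \ref{T1} to $(f,g)$ and to $(f^{k},g^{k})$, it suffices to show that $h:=g^{-1}f$ has shadowing if and only if $g^{-k}f^{k}$ has shadowing. Since $f$ and $g$ commute, so do $f$ and $g^{-1}$, and therefore $g^{-k}f^{k}=(g^{-1}f)^{k}=h^{k}$.

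Next we invoke the classical fact that on a compact metric space a homeomorphism $h$ has shadowing if and only if $h^{k}$ has shadowing, for any $k\neq 0$. For $k<0$, write $h^{k}=(h^{-1})^{|k|}$ and use the standard fact that $h$ has shadowing if and only if $h^{-1}$ does. That fact follows by reindexing: if $\{x_n\}$ is a $\delta'$-pseudo orbit of $h^{-1}$, then $\{x_{-n}\}$ is, up to a modulus of uniform continuity of $h$, a $\delta$-pseudo orbit of $h$. This reduces everything to $k\geq 1$.

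The step I expect to be the main obstacle is proving the power lemma for $k\ge1$, which I would do as follows.

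($\Rightarrow$) Suppose $h$ has shadowing, and let $\epsilon>0$ be given. Take $\delta$ from shadowing of $h$ for $\epsilon$. Given a $\delta$-pseudo orbit $\{y_n\}$ of $h^{k}$, interpolate it to $x_{nk+j}=h^{j}(y_n)$ for $0\le j<k$. This is a $\delta$-pseudo orbit of $h$, because the only jumps occur at the block ends, where $d(h^{k}(y_n),y_{n+1})\le\delta$. A point $\epsilon$-tracing $\{x_m\}$ under $h$ then $\epsilon$-traces $\{y_n\}$ under $h^{k}$.

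($\Leftarrow$) Suppose $h^{k}$ has shadowing, and let $\epsilon>0$ be given. By uniform continuity on the compact space, choose $\epsilon'\le\epsilon/2$ so that $d(a,b)\le\epsilon'$ implies $d(h^{j}a,h^{j}b)\le\epsilon/2$ for all $0\le j<k$. Take $\delta'$ from shadowing of $h^{k}$ for $\epsilon'$. Then choose $\delta$ small enough that, for any $\delta$-pseudo orbit $\{x_m\}$ of $h$, the chained errors satisfy $d(h^{j}(x_m),x_{m+j})\le\min(\delta',\epsilon/2)$ for all $j\le k$; this is obtained by the triangle inequality and uniform continuity of $h,\dots,h^{k}$. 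With this choice, $\{x_{nk}\}$ is a $\delta'$-pseudo orbit of $h^{k}$. If $z$ $\epsilon'$-traces it under $h^{k}$, then for each $0\le j<k$,
\[
d(h^{nk+j}z,x_{nk+j})\le d(h^{j}h^{nk}z,h^{j}x_{nk})+d(h^{j}x_{nk},x_{nk+j})\le\epsilon.
\]
Hence $z$ $\epsilon$-traces $\{x_m\}$ under $h$, and the argument is complete.
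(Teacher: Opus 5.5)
Your proof is correct and follows the paper's route: apply Theorem \ref{T1} to both $(f,g)$ and $(f^{k},g^{k})$, use $g^{-k}f^{k}=(g^{-1}f)^{k}$, and invoke the classical fact that a homeomorphism of a compact space has shadowing if and only if its $k$-th power does. The paper only cites that fact and leaves the hypothesis checks implicit, whereas you also verify that $(f^{k},g^{k})$ is commutative with $g^{k}$ equicontinuous, and you give a correct proof of the power lemma, including the reduction of negative $k$ to $h^{-1}$.
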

\begin{proof}
Recall that a homeomorphism $h:X\rightarrow X$ has shadowing if and only if $h^{k}$ has shadowing for some $k\in \mathbb{Z}\setminus \lbrace 0\rbrace$. Now proof follows from Theorem \ref{T1}.
\end{proof}

Following examples justify that if we drop either the commutativity of paired system $(f, g)$ or the equicontinuity of $g$, then the Theorem \ref{T1} does not hold true.

\begin{Example}
Let $X = \mathbb{R}$ be the real line equipped with the Euclidean metric. For $\mathsf{a} > 0$ and $\mathsf{a}\neq 1$, define $f_{\mathsf{a}} : X\rightarrow X$ by $f_{\mathsf{a}}(x) = \mathsf{a}x$, for each $x\in X$. Clearly, $f_{\mathsf{a}}^{-1} = f_{\frac{1}{\mathsf{a}}}$.  We claim that $(f_{2}, f_{\frac{1}{2}})$ does not have PSP. 
Choose $\epsilon = \frac{1}{2}$. For each $k\in \mathbb{N}$. set $\delta_{k} = \frac{1}{k}$ and define $\lbrace x_{n}^{k}\rbrace_{n\in \mathbb{Z}}$ be  such that 
\begin{center}
$x_{n}^{k} =  \begin{cases} 
		0 & \text{if\space} n\leq 0 \\
		\frac{2}{5k}(1-\frac{1}{6^{n}}) &  \text{if\space} n\geq 1
		\end{cases}.$
\end{center}
Clearly $\lbrace x_{n}^{k}\rbrace_{n\in \mathbb{Z}}$ forms a $\delta_{k}$-pseudo orbit of $(f_{2}, f_{\frac{1}{2}})$. Note that if $x^{k}$ is an $\epsilon$-tracing point of $\lbrace x_{n}^{k}\rbrace_{n\in \mathbb{Z}}$, then $d(f_{2}^{n}(x^{k}), f^{n}_{\frac{1}{2}}(x_{n}^{k})) \leq \frac{1}{2}$, for each $n\in \mathbb{Z}$ implying that $\frac{-3}{2} \leq 2^{n}(x^{k}) \leq \frac{3}{2}$, for each $n\in \mathbb{N}$ which is not possible as $2^{n}x^{k}\rightarrow \infty$ as $n\rightarrow \infty$. Hence $(f_{2}, f_{\frac{1}{2}})$ does not have PSP.
Since $f_{2}f_{\frac{1}{2}}^{-1}= f^{4}$ has shadowing property, we conclude that if $(f, g)$ is commutative but $g$ is not equicontinuous, then shadowing of $g^{-1}f$ does not imply that $(f, g)$ has PSP. 
\label{E1} 
\end{Example}

\begin{Remark}
We can use similar arguments as given in Example \ref{E1} to show that for each $\mathsf{a} > 0$ and $\mathsf{a}\neq 1$, paired systems $(f_{\mathsf{a}}, f_{\frac{1}{\mathsf{a}}})$ and $(Id, f_{\mathsf{a}})$ on $X$ does not have PSP. 
\end{Remark}

\begin{Example}
Let $X = \prod_{-\infty}^{\infty} X_{i}$ where $X_{i} = \lbrace 0, 1\rbrace$, for each $i\in \mathbb{Z}$ and be equipped with the metric $d(x, y) = \sum_{i = -\infty}^{\infty} \frac{|x_{i} - y_{i}|}{2^{|i|}}$, where $x = (x_{i}), y = (y_{i})\in X$. 
Let $f$ be the shift map defined as $f(x) = y$, where $y_{i} = x_{i+1}$, for each $i\in \mathbb{Z}$ and for each $x = (x_{i})\in X$. We claim that $(Id, f)$ does not have paired shadowing. 
Choose $0< \epsilon < \frac{1}{4}$ and set $\delta_{k} = \frac{1}{k}$, for each $k\in \mathbb{N}$. Note that $d(x, y) < \frac{1}{4}$ only when $x_{i} = y_{i}$, for all $-2\leq i\leq 2$. Choose the smallest $p_{k}\geq 0$ such that $d(x, y) < \delta_{k}$ whenever $x_{i} = y_{i}$, for all $-p_{k}\leq i\leq p_{k}$. 
For each $k\in \mathbb{N}$, define a sequence $\lbrace x_{n}^{k}\rbrace_{n\in \mathbb{Z}}$ such that $x_{n}^{k} = (0)$, for all $n\leq 0$, $(x_{n-1}^{k})_{i} = (x_{n}^{k})_{i+1}$, for all $-p_{k}\leq i\leq p_{k}$ and for all $n\geq 1$ and $(x_{p_{k}}^{k})_{p_{k}+1} \neq (x_{p_{k}+1}^{k})_{p_{k}+2}$. Clearly, $\lbrace x_{n}^{k}\rbrace_{n\in \mathbb{Z}}$ forms a $\delta_{k}$-pseudo orbit of $(Id, f)$. 
Note that if $x^{k}\in X$ is an $\epsilon$-tracing point of $\lbrace x_{n}^{k}\rbrace_{n\in \mathbb{Z}}$, then $d(x^{k}, f^{n}(x_{n}^{k})) < \frac{1}{4}$, for each $n\in \mathbb{Z}$. 
Therefore $(x^{k})_{i} = (x_{n}^{k})_{n+i}$, for all $-2\leq i \leq 2$ and for each $n\in \mathbb{Z}$ implying that $(x_{p_{k}}^{k})_{p_{k}+1} = (x^{k})_{1} = (x_{p_{k}+1}^{k})_{p_{k}+2}$ which is a contradiction. Hence $(f, g)$ does not have PSP. Since $f^{-1}$ has shadowing property, we get that if $(f, g)$ is commutative but $g$ is not equicontinuous then shadowing of $g^{-1}f$ does not imply that $(f, g)$ has shadowing even on a compact space. 
\label{E2}
\end{Example}

\begin{Remark}
If $f$ denotes the shift map defined in Example \ref{E2}, then we can use similar arguments as given in Example \ref{E2} to show that the paired system $(f, f)$ on $X$ does not have PSP. 
\end{Remark}

\begin{Example}
Let $X = \lbrace -1, 0, 1\rbrace$ be equipped with the discrete metric. Define $f: X\rightarrow X$ by $f(0) = 1$, $f(1) = -1$ and $f(-1) = 0$. Define $g: X\rightarrow X$ by $g(0) = 1$, $g(1) = 0$ and $g(-1) = -1$. Clearly $(f, g)$ is not commutative and $g^{-1}f$ has shadowing. It is easy to check that for any $0 < \epsilon < 1$ and $0 < \delta < 1$, the sequence $\lbrace x_{n}\rbrace_{n\in \mathbb{Z}}$ defined as $x_{2n} = -1$ and $x_{2n+1} = 1$, for each $n\in \mathbb{Z}$ forms a $\delta$-pseudo orbit of $(f, g)$ but it can not be $\epsilon$-traced through $(f, g)$ by any point of $X$. Hence if $g$ is equicontinuous but $(f, g)$ is not commutative, then the shadowing of $g^{-1}f$ does not imply that $(f, g)$ has paired shadowing.
\label{E3} 
\end{Example}
\vspace*{0.2cm}

\begin{theorem}
Let $(f, g)$ and $(f', g')$ be paired systems on $(X, d)$ and $(Y, p)$ respectively. Then, the following statements are true:
\begin{enumerate}
\item[(i)] Assume that $X \times Y$ is equipped with the maximum metric $q$. Then $(F, G) = (f\times f', g\times g')$ has PSP if and only if $(f, g)$ and $(f', g')$ have PSP.
\item[(ii)] If $(f,g)$ is commutative, then $(f, g)$ has PSP if and only if $(f^{-1}, g^{-1})$ has PSP.
\item[(iii)] If $(f,g)$ is commutative, then $(f, g)$ has PSP if and only if $(f^{k}, g^{k})$ has PSP, for all $k\in \mathbb{Z}\setminus \lbrace 0\rbrace$.
\end{enumerate}
\label{T2}
\end{theorem}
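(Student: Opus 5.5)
The plan is to prove each item directly from Definition~\ref{D1}, turning pseudo orbits of one paired system into pseudo orbits of another. Commutativity lets us move powers of $f$ and $g$ past each other, and uniform continuity of finitely many fixed compositions of $f^{\pm1},g^{\pm1}$ converts one $\delta$ into another.

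\emph{Item (i).} Since $q$ is the maximum metric, a sequence $\{(x_n,y_n)\}$ is a $\delta$-pseudo orbit of $(F,G)$ if and only if $\{x_n\}$ and $\{y_n\}$ are $\delta$-pseudo orbits of $(f,g)$ and $(f',g')$. Likewise, $(x,y)$ $\epsilon$-traces the pair if and only if $x$ and $y$ $\epsilon$-trace the respective coordinates. This gives the ``if'' direction at once, taking $\delta=\min(\delta_1,\delta_2)$.

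For the ``only if'' direction, take a $\delta$-pseudo orbit $\{x_n\}$ of $(f,g)$. We pair it with an exact orbit of $(f',g')$: fix $y_0\in Y$ and put $y_n=(g'^{-1}f')^n(y_0)$ for $n\in\mathbb{Z}$. Then $f'(y_n)=g'(y_{n+1})$, so $\{(x_n,y_n)\}$ is a $\delta$-pseudo orbit of $(F,G)$. The first coordinate of any $\epsilon$-tracing point for this pair $\epsilon$-traces $\{x_n\}$. The argument for $(f',g')$ is symmetric.

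\emph{Item (ii).} Let $\{x_n\}$ be a pseudo orbit of $(f^{-1},g^{-1})$ and set $w_n=x_{-n}$. Applying the uniform equivalence $fg=gf$ to the inequality $d(f^{-1}x_{-n-1},g^{-1}x_{-n})\le\delta$ gives
\[
d(g x_{-n-1},f x_{-n}) = d\bigl(f(w_n),g(w_{n+1})\bigr).
\]
So, choosing $\delta$ via the uniform continuity of $fg$, $\{w_n\}$ is a $\gamma$-pseudo orbit of $(f,g)$. Suppose $x$ satisfies $d(f^nx,g^nw_n)\le\epsilon$ for all $n$. Substituting $n\mapsto -n$ gives $d(f^{-n}x,g^{-n}x_n)\le\epsilon$ for all $n$, which is exactly $\epsilon$-tracing for $(f^{-1},g^{-1})$. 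The converse is the same argument with $(f^{-1},g^{-1})$ in place of $(f,g)$.

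\emph{Item (iii).} By (ii) it suffices to treat $k>0$.

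\emph{PSP of $(f,g)$ implies PSP of $(f^k,g^k)$.} Given a $\delta$-pseudo orbit $\{x_n\}$ of $(f^k,g^k)$, interpolate by setting
\[
z_{nk+j}=g^{-j}f^jx_n, \qquad 0\le j<k.
\]
By commutativity, $f(z_{nk+j})=g(z_{nk+j+1})$ exactly for $j<k-1$. At $j=k-1$ the defect is
\[
d\bigl(g^{-(k-1)}f^kx_n,\;g^{-(k-1)}g^kx_{n+1}\bigr),
\]
which is small by the uniform continuity of $g^{-(k-1)}$. A tracing point $x$ for $\{z_m\}$ then satisfies, in particular, $d(f^{nk}x,g^{nk}x_n)\le\epsilon$, which is the required tracing for $(f^k,g^k)$.

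\emph{PSP of $(f^k,g^k)$ implies PSP of $(f,g)$.} Given a pseudo orbit $\{z_m\}$ of $(f,g)$, set $x_n=z_{nk}$. We bound $d(f^kz_{nk},g^kz_{nk+k})$ by a telescoping chain through the points $f^{k-i}g^iz_{nk+i}$. Consecutive terms differ by $f^{k-i-1}g^i$ applied to the pair $\bigl(f z_{nk+i},\,g z_{nk+i+1}\bigr)$, so by uniform continuity of these finitely many maps $\{x_n\}$ is a $\gamma$-pseudo orbit of $(f^k,g^k)$. Take $x$ with $d(f^{nk}x,g^{nk}z_{nk})\le\eta$.

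The main obstacle is controlling the intermediate times $m=nk+j$. The first term, $d(f^{j}f^{nk}x,f^jg^{nk}z_{nk})$, is fine by the uniform continuity of $f^j$. The second term, $d(g^{nk}f^jz_{nk},g^{nk}g^jz_{nk+j})$, involves the unbounded power $g^{nk}$, which is not controlled without equicontinuity. My first attempt would be to apply $(f^k,g^k)$-shadowing separately to each residue class $\{z_{nk+j}\}_n$. I would then try to show, using commutativity, that the $k$ resulting tracing points can be chosen compatibly, namely that $f^{-j}$ of the $j$-th tracing point traces the $0$-th class. If this fails, I would check whether the statement needs an extra hypothesis such as equicontinuity of $g$, as in Theorem~\ref{T1}.
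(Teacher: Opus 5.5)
Your arguments for (i), (ii), and the implication ``$(f,g)$ has PSP $\Rightarrow$ $(f^{k},g^{k})$ has PSP'' are correct and essentially the paper's proof. You use the same reversal $w_n=x_{-n}$ with $fg=gf$, the same interpolation $z_{nk+j}=g^{-j}f^{j}x_n$ with its single defect controlled by $g^{-(k-1)}$, and (ii) for negative $k$. In (i) you also make explicit the exact orbit $y_n=(g'^{-1}f')^{n}(y_0)$ that the paper takes for granted.

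The gap is the reverse half of (iii), which your proposal leaves unresolved, and it comes from misreading the quantifier. The statement says that $(f,g)$ has PSP if and only if $(f^{k},g^{k})$ has PSP \emph{for every} $k\neq 0$. The reverse implication is then immediate by taking $k=1$, which is what the paper means by ``converse follows trivially''. You instead try to show that PSP of $(f^{k},g^{k})$ for one fixed $k$ forces PSP of $(f,g)$. That is a stronger claim, and your residue-class plan does not close it. Suppose you arranged $y_j=f^{j}y_0$ for tracing points $y_j$ of the classes $\{z_{nk+j}\}_n$. Then $f^{nk+j}y_0=f^{nk}y_j$ is close to $g^{nk}z_{nk+j}$, not to the required $g^{nk+j}z_{nk+j}$. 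The relation you actually need is $y_j=g^{-j}f^{j}y_0$. Verifying it brings back $g^{nk}$ applied to the close pair $g^{-j}f^{j}z_{nk}$, $z_{nk+j}$, which is essentially the term you already could not control. Without expansivity, nothing forces different tracing points to agree either. The paper obtains a single-$k$ equivalence only under the extra hypotheses that $g$ is equicontinuous and $X$ is compact (Corollary~\ref{C1}, via Theorem~\ref{T1} and the classical result for $g^{-1}f$). So your suspicion that equicontinuity is needed for that version matches the paper. For the theorem as stated, replace your last paragraph with the one-line $k=1$ argument.
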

\begin{proof}
Let $(f, g)$ and $(f', g')$ be paired systems on $(X, d)$ and $(Y, p)$. 
\begin{enumerate}
\item[(i)] Assume that $X \times Y$ is equipped with the maximum metric $q$ defined by $q((x_{1}, y_{1}), (x_{2},y_{2})) = \max \lbrace d(x_{1}, x_{2}), p(y_{1}, y_{2})\rbrace$, for all $(x_{1}, y_{1}), (x_{2}, y_{2})\in X\times Y$. For a given $\epsilon > 0$, choose a $\delta > 0$ by PSP of $(F, G)$. Let $\lbrace x_{n} \rbrace_{n\in \mathbb{Z}}$ and $\lbrace y_{n}\rbrace_{n\in \mathbb{Z}}$ be $\delta$-pseudo orbits of $(f, g)$ and $(f', g')$ respectively. Clearly, $\lbrace (x_{n}, y_{n}) \rbrace_{n\in \mathbb{Z}}$ forms a $\delta$-pseudo orbit of $(F, G)$ and hence can be $\epsilon$-traced through $(F, G)$ by some point $(x, y)\in X \times Y$. 
Note that $x$ is an $\epsilon$-tracing point of $\lbrace x_{n} \rbrace_{n\in \mathbb{Z}}$ through $(f, g)$ and $y$ is an $\epsilon$-tracing point of $\lbrace y_{n} \rbrace_{n\in \mathbb{Z}}$ through $(f', g')$. Therefore $(f, g)$ and $(f', g')$ have PSP. 
Conversely, for a given $\epsilon > 0$, choose a $\delta > 0$ by PSP of $(f, g)$ and $(f', g')$. 
We observe that if $\lbrace (x_{n}, y_{n}) \rbrace_{n\in \mathbb{Z}}$ is a $\delta$-pseudo orbit of $(F, G)$, then $\lbrace x_{n}\rbrace_{n\in \mathbb{Z}}$ is a $\delta$-pseudo orbit of $(f, g)$ and $\lbrace y_{n}\rbrace_{n\in \mathbb{Z}}$ is a $\delta$-pseudo orbit of $(f', g')$. 
Choose an $x\in X$ and $y\in Y$ such that $\lbrace x_{n} \rbrace_{n\in \mathbb{Z}}$ and $\lbrace y_{n} \rbrace_{n\in \mathbb{Z}}$ can be $\epsilon$-traced by $x$ and $y$ through $(f, g)$ and $(f', g')$ respectively. Note that $\lbrace (x_{n}, y_{n}) \rbrace_{n\in \mathbb{Z}}$ can be $\epsilon$-traced by $(x, y)$ through $F\times G$. 
Hence $(F, G)$ has PSP.
 
\item[(ii)] Since it is enough to prove the one way, we assume that $(f, g)$ is a commutative paired system with PSP. 
For a given $\epsilon > 0$, choose a $\delta > 0$ such that each $\delta$-pseudo orbit of $(f, g)$ can be $\epsilon$-traced by some point of $X$. 
Choose $0 < \delta' < \delta$ such that if $x,y\in X$ satisfy $d(x, y) < \delta'$, then $d(g(x), g(y)) < \delta$. 
Choose $0 < \delta'' < \delta'$ such that if $x, y\in X$ satisfy $d(x, y) < \delta''$, then $d(f(x), f(y)) < \delta'$. 
Note that if $\lbrace x_{n} \rbrace_{n\in \mathbb{Z}}$ is a $\delta''$-pseudo orbit of $(f^{-1}, g^{-1})$, then $\lbrace x'_{n} = x_{-n} \rbrace_{n\in \mathbb{Z}}$ is a $\delta$-pseudo orbit of $(f, g)$. Also that every $\epsilon$-tracing point of $\lbrace x'_{n} \rbrace_{n\in \mathbb{Z}}$ through $(f, g)$ is an $\epsilon$-tracing point of $\lbrace  x_{n} \rbrace_{n\in \mathbb{Z}}$ through $(f^{-1}, g^{-1})$.
Hence $(f^{-1}, g^{-1})$ has PSP. 

\item[(iii)] Since converse follows trivially, we prove the forward implication only. Suppose that $(f, g)$ has PSP and fix a $k \geq 1$. For a given $\epsilon > 0$, choose $0 < \eta < \epsilon$ such that each $\eta$-pseudo orbit of $(f, g)$ can be $\epsilon$-traced through $(f, g)$ by some point of $X$. 
Choose $0 < \delta < \eta$ such that if $x, y\in X$ satisfy $d(x, y)< \delta$, then $d(g^{-k+1}(x), g^{-k+1}(y)) < \eta$. 
Consider a $\delta$-pseudo orbit $\lbrace x_{n}\rbrace_{n\in \mathbb{Z}}$ of $(f^{k}, g^{k})$ i.e. $d(f^{k}(x_{n}), g^{k}(x_{n+1})) < \delta$, for each $n\in \mathbb{Z}$. Clearly, $d(g^{-k+1}f^{k}(x_{n}), g(x_{n+1})) < \eta$, for each $n\in \mathbb{Z}$. 
Define $y_{kn + j} = g^{-j}f^{j}(x_{n})$, for each $n\in \mathbb{Z}$ and for all $0 \leq j \leq (k-1)$. 
Note that for each $n\in \mathbb{Z}$, given $0 \leq j \leq (k-2)$ and for $i = kn+j$, we have $d(f(y_{kn+j}), g(y_{kn+j+1})) = d(g^{-j}f^{j+1}	(x_{n}), gg^{-j-1}f^{j+1}(x_{n})) = 0 < \eta$. 
For $i = kn+k-1$, we have $d(f(y_{i}), g(y_{i+1})) = d(g^{-k+1}f^{k}(x_{n}), g(x_{n+1})) < \eta$. 
Therefore $\lbrace y_{n} \rbrace_{n\in \mathbb{Z}}$ is an $\eta$-pseudo orbit of $(f,g)$. 
Choose a $y\in X$ such that $d(f^{n}(y), g^{n}(y_{n})) < \epsilon$, for each $n\in \mathbb{Z}$. 
Since $y_{kn} = x_{n}$, for each $n\in \mathbb{Z}$, we get that $d(f^{kn}(y), g^{kn}(x_{n}))< \epsilon$, for each $n\in \mathbb{Z}$. 
Hence $(f^{k}, g^{k})$ has PSP. Since $k$ is chosen arbitrary, we use (ii) to conclude that $(f^{k}, g^{k})$ has PSP, for all $k\in \mathbb{Z}\setminus \lbrace 0\rbrace$.
\end{enumerate}
\end{proof}

\begin{theorem} \label{T3}
For paired dynamical systems, PSP is a uniform dynamical property. 
\end{theorem}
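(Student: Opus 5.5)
Suppose $(X,(f,g))$ has PSP and $\Phi : X\rightarrow Y$ is a uniform equivalence with $\Phi\circ f = f'\circ \Phi$ and $\Phi\circ g = g'\circ \Phi$. We will show that $(Y,(f',g'))$ has PSP. The plan is to transport pseudo orbits from $Y$ to $X$ through $\Phi^{-1}$, shadow them there, and push the tracing point back to $Y$ through $\Phi$. Each step only needs uniform continuity of $\Phi$ or $\Phi^{-1}$. It does not need uniform continuity of $f,g$ along orbits, because the conjugacy relations pass through all iterates exactly.

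First we fix the constants. Let $\epsilon>0$. By uniform continuity of $\Phi$, choose $\eta>0$ such that $d(a,b)\leq \eta$ implies $p(\Phi(a),\Phi(b))\leq \epsilon$. By PSP of $(f,g)$, choose $\gamma>0$ such that every $\gamma$-pseudo orbit of $(f,g)$ is $\eta$-traced by some point of $X$. By uniform continuity of $\Phi^{-1}$, choose $\delta>0$ such that $p(u,v)\leq\delta$ implies $d(\Phi^{-1}(u),\Phi^{-1}(v))\leq \gamma$.

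Now let $\lbrace y_n\rbrace_{n\in\mathbb{Z}}$ be a $\delta$-pseudo orbit of $(f',g')$ and set $x_n=\Phi^{-1}(y_n)$. The conjugacy gives $\Phi^{-1}\circ f' = f\circ\Phi^{-1}$ and $\Phi^{-1}\circ g' = g\circ \Phi^{-1}$. Hence $f(x_n)=\Phi^{-1}(f'(y_n))$ and $g(x_{n+1})=\Phi^{-1}(g'(y_{n+1}))$, so $d(f(x_n),g(x_{n+1}))\leq\gamma$ for each $n$. Thus $\lbrace x_n\rbrace_{n\in\mathbb{Z}}$ is a $\gamma$-pseudo orbit of $(f,g)$, and there is an $x\in X$ with $d(f^n(x),g^n(x_n))\leq\eta$ for each $n\in\mathbb{Z}$.

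Put $y=\Phi(x)$. Since $\Phi\circ f^n = f'^n\circ\Phi$ and $\Phi\circ g^n=g'^n\circ\Phi$ for all $n\in\mathbb{Z}$ (for negative $n$ this follows by inverting the conjugacy relation), we get $\Phi(f^n(x))=f'^n(y)$ and $\Phi(g^n(x_n))=g'^n(y_n)$. The choice of $\eta$ then gives $p(f'^n(y),g'^n(y_n))\leq\epsilon$ for each $n$, so $y$ $\epsilon$-traces the pseudo orbit. There is no real obstacle here. The one point needing care is that uniform continuity of $\Phi$ is applied only to the single pairs $(f^n(x),g^n(x_n))$, which is why no equicontinuity of $g$ is required, unlike in Theorem \ref{T1}. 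The constants should also be chosen in the reverse order given above.
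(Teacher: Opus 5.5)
Your proof is correct and follows essentially the same route as the paper. Both arguments take a modulus of uniform continuity for $\Phi$, then the PSP constant of $(f,g)$, then a modulus for $\Phi^{-1}$. Both then pull the pseudo orbit back by $\Phi^{-1}$, trace it in $X$, and push the tracing point forward by $\Phi$. Your version also uses non-strict inequalities consistent with the definition of PSP, and it explicitly justifies the conjugacy relations for negative iterates, which the paper uses without comment.
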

\begin{proof}
Let $(X,(f,g))$ and $(Y,(f',g'))$ be uniformly conjugated paired systems via uniform equivalence $\Phi : X\rightarrow Y$ i.e. $\Phi\circ f = f'\circ \Phi$ and $\Phi\circ g = g'\circ \Phi$.
Suppose that $(f, g)$ has PSP. For a given $\epsilon > 0$, choose $0 < \delta < \epsilon$ such that if $x_{1}, x_{2}\in X$ satisfy $d(x_{1}, x_{2}) < \delta$, then $p(\Phi(x_{1}), \Phi(x_{2})) < \epsilon$. 
Choose $0 < \eta < \delta$ correspond to $\delta$ by PSP of $(f, g)$. 
Choose $0 < \gamma <\eta$ such that if $y_{1}, y_{2}\in Y$ satisfy $p(y_{1}, y_{2}) < \gamma$, then $d(\Phi^{-1}(y_{1}), \Phi^{-1}(y_{2})) < \eta$. 
Let $\lbrace y_{n} \rbrace_{n\in \mathbb{Z}}$ be a $\gamma$-pseudo orbit of $(f', g')$. Therefore $p(f'(y_{n}), g'(y_{n+1})) < \gamma$, for each $n\in \mathbb{Z}$  implying that $ d(\Phi^{-1}f'(y_{n}), \Phi^{-1}g'(y_{n+1})) < \eta$, for each $n\in \mathbb{Z}$ implying that $ d(f\Phi^{-1}(y_{n}), g\Phi^{-1}(y_{n+1})) < \eta$, for each $n\in \mathbb{Z}$. Hence $\alpha = \lbrace \Phi^{-1}(y_{n}) \rbrace_{n\in \mathbb{Z}}$ is an $\eta$-pseudo orbit of $(f, g)$. 
Let $x\in X$ be a $\delta$-tracing point of $\alpha$ through $(f, g)$. 
Therefore $d(f^{n}(x), g^{n}\Phi^{-1}(y_{n})) < \delta$, for each $n\in \mathbb{Z}$ implying that $p(\Phi f^{n}(x), \Phi g^{n}\Phi^{-1}(y_{n}) < \epsilon$, for each $n\in \mathbb{Z}$ implying that $p((f')^{n}\Phi(x), (g')^{n}(y_{n})) < \epsilon$, for each $n\in \mathbb{Z}$. 
Hence $\lbrace y_{n} \rbrace_{n\in \mathbb{Z}}$ can be $\epsilon$-traced by $y = \Phi(x)$ through $(f', g')$.
Therefore $(f', g')$ has PSP. 
\end{proof}

\section{Paired Topological Stability}
In recent days, the appearance of many complex dynamical systems in different domains of Applied Mathematics, Physics, Engineering, Biotechnology etc. unleashed the usefulness of the notion of topological stability in real life problems. Originally, it was introduced by Peter Walters \cite{WA} for a diffeomorphism between compact smooth manifolds. The concept roughly expresses the fact that the dynamics of a homeomorphism remains unaffected by continuous small perturbations. The mathematical definition given by Walters can be formulated for homeomorphisms on compact metric spaces as follows:
A homeomorphism $f$ on a compact metric space $X$ is said to be topologically stable if for each $\epsilon>0$, there exists a $\delta>0$ such that for each homeomorphism $h$ on $X$ satisfying $d_{C^{0}}(f, h) = \sup\limits_{x\in X} d(f(x),h(x))<\delta$, there exists a continuous map $k:X\rightarrow X$ such that $f\circ k = k\circ h$ and $d(k(x),x)<\epsilon$, for each $x\in X$  \cite{WO}.           
\par
\vspace*{0.1cm}

We define a bounded metric $\overline{d}$ on $X$ by $\overline{d}(x, y) = min \lbrace d(x, y), 1\rbrace$, for all $x, y\in X$. For homeomorphisms $f, f'$ on $X$, we denote $\overline{d}(f, f') = \sup\limits_{x\in X}\overline{d}(f(x), f'(x))$. 
For a given $\delta > 0$, we define $UE_{(f,g)}^{\delta}(X)=\lbrace h: X\rightarrow X \mid (h, g)$ is a commutative paired system on $X$ and $\overline{d}(f, gh) < \delta\rbrace$. Clearly every uniform equivalence on $(X, d)$ is a uniform equivalence on $(X, \overline{d})$ and conversely. We now introduce the notion of topological stability for paired systems. 

\begin{Definition}
We say that a paired system $(f,g)$ on $X$ is paired topologically stable (PTS) if for each $\epsilon>0$, there exists a $\delta>0$ such that for each $h\in UE_{(f,g)}^{\delta}(X)$, there exists a continuous map $k:X \rightarrow X$ such that $f \circ k = k\circ g\circ h$ and $d(k(x), x) < \epsilon$, for each $x\in X$. Additionally, if $g\circ k = k\circ g$, then we say that $(f, g)$ is $c$-paired topologically stable.
\label{D2} 
\end{Definition}

Paired topological stability of $(f, g)$ tells us that in the class of all uniform equivalences of $X$ equipped with bounded metric, there is a neighbourhood $U$ of $g^{-1}f$ in which $f$ is fairly simple map from dynamical viewpoint whenever we study it via $g$, i.e. $f$ can be seen via continuous image of $g\circ h$ for every uniform equivalence $h\in U$ commuting with $g$. While dealing with PTS, we can assume that $0<\mathfrak{c}, \mathfrak{c'}, \epsilon, \delta<1$.

\begin{theorem}
For paired dynamical systems, paired topological stability and $c$-paired topological stability  are uniform dynamical properties. 
\label{TE3}
\end{theorem}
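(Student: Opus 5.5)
The plan is to transport the definition of PTS along the conjugacy, in the same spirit as the proof of Theorem \ref{T3}. Let $(X,(f,g))$ and $(Y,(f',g'))$ be uniformly conjugate via a uniform equivalence $\Phi : X\rightarrow Y$, so that $\Phi f=f'\Phi$ and $\Phi g=g'\Phi$. Assume that $(f,g)$ is PTS. The goal is to show that $(f',g')$ is PTS, and that it is $c$-PTS whenever $(f,g)$ is.

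First fix $\epsilon>0$ and choose $0<\eta<\epsilon$ by uniform continuity of $\Phi$, so that $d(x_1,x_2)<\eta$ implies $p(\Phi(x_1),\Phi(x_2))<\epsilon$. Let $\delta>0$ be the constant given by PTS of $(f,g)$ for $\eta$. Next choose $0<\gamma<\min\{\delta,1\}$ by uniform continuity of $\Phi^{-1}$, so that $p(y_1,y_2)<\gamma$ implies $d(\Phi^{-1}(y_1),\Phi^{-1}(y_2))<\delta$. We may assume $\delta<1$, so bounded distances below $\gamma$ are genuine distances.

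Now take $h'\in UE^{\gamma}_{(f',g')}(Y)$ and set $h=\Phi^{-1}h'\Phi$. This $h$ is a uniform equivalence on $X$, being a composition of uniform equivalences. It commutes with $g$, because
\[
gh=\Phi^{-1}g'\Phi\Phi^{-1}h'\Phi=\Phi^{-1}g'h'\Phi=\Phi^{-1}h'g'\Phi=hg .
\]
For every $x\in X$ we have $f(x)=\Phi^{-1}f'(\Phi x)$ and $gh(x)=\Phi^{-1}g'h'(\Phi x)$. Since $\overline{p}(f',g'h')<\gamma<1$, the points $f'(\Phi x)$ and $g'h'(\Phi x)$ satisfy $p(f'(\Phi x),g'h'(\Phi x))<\gamma$, and hence $d(f(x),gh(x))<\delta$. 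Taking the supremum over $x$ requires a little care, since a supremum of quantities each below $\gamma$ may equal $\gamma$. To handle this, shrink $\gamma$ once more, to a value whose $\Phi^{-1}$-modulus is at most $\delta/2$. Then $\overline{d}(f,gh)\le\delta/2<\delta$, so $h\in UE^{\delta}_{(f,g)}(X)$.

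By PTS of $(f,g)$ there is a continuous $k:X\to X$ with $fk=kgh$ and $d(k(x),x)<\eta$ for all $x$. Define $k'=\Phi k\Phi^{-1}$, which is continuous. It satisfies
\[
f'k'=\Phi fk\Phi^{-1}=\Phi kgh\Phi^{-1}=\Phi k\Phi^{-1}g'h'=k'g'h',
\]
where the third equality uses $gh\Phi^{-1}=\Phi^{-1}g'h'$. For the distance estimate, fix $y\in Y$ and put $x=\Phi^{-1}(y)$. Then
\[
p(k'(y),y)=p(\Phi k(x),\Phi(x))<\epsilon,
\]
by the choice of $\eta$. If moreover $gk=kg$, then
\[
g'k'=\Phi gk\Phi^{-1}=\Phi kg\Phi^{-1}=k'g',
\]
so $c$-PTS transfers as well.

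The only delicate point is the one already handled: the translation of the bounded-metric closeness $\overline{d}(f,gh)<\delta$ and the strict supremum inequality through the uniform continuity of $\Phi^{-1}$. Choosing all constants below $1$ and using the half-margin resolves it. Every other step is direct algebraic conjugation.
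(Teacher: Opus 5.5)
Your proof is correct and follows essentially the same route as the paper: you conjugate $h'$ back to $h=\Phi^{-1}h'\Phi\in UE^{\delta}_{(f,g)}(X)$, apply paired topological stability of $(f,g)$, and push the resulting $k$ forward to $\Phi k\Phi^{-1}$, with the $c$-part transferring by the same conjugation algebra. Your extra steps (checking that $h$ commutes with $g$, passing from $\overline{p}$ to $p$ via $\gamma<1$, and using the $\delta/2$ margin so the supremum stays strictly below $\delta$) fill in small points that the paper's proof leaves implicit.
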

\begin{proof}
Let $(X,(f,g))$ and $(Y,(f',g'))$ be uniformly conjugated paired systems via uniform equivalence $\Phi : X\rightarrow Y$ i.e. $\Phi\circ f = f'\circ \Phi$ and $\Phi\circ g = g'\circ \Phi$. 
Let $(f, g)$ be PTS. For a given $\epsilon > 0$, choose $0 < \delta <\epsilon$ such that if $x_{1}, x_{2}\in X$ satisfy $d(x_{1}, x_{2}) < \delta$, then $p(\Phi(x_{1}), \Phi(x_{2}))$ $< \epsilon$. Choose $0 < \eta < \delta$ correspond to $\delta$ by PTS of $(f, g)$. Choose $0< \gamma <\eta$ such that if $y_{1}, y_{2}\in Y$ satisfy $p(y_{1}, y_{2}) < \gamma$, then $d(\Phi^{-1}(y_{1}), \Phi^{-1}(y_{2})) < \eta$. 
Choose $h'\in UE_{(f', g')}^{\gamma}(Y)$. Therefore $d(\Phi^{-1}f'(y), \Phi^{-1}g'h'(y)) = d(f\Phi^{-1}(y), g\Phi^{-1}h'(y))$ $= d(f\Phi^{-1}(y), g\Phi^{-1}h'\Phi(\Phi^{-1}(y))) < \eta$, for each $y\in Y$ implying that $d(f(x), g\Phi^{-1}h'\Phi(x))$ $< \eta$, for each $x\in X$. 
Thus we have $\Phi^{-1} h' \Phi \in UE_{(f, g)}^{\eta}(X)$ and hence by paired topological stability of $(f, g)$, we can choose a continuous map $k: X\rightarrow X$ such that $f\circ k = k\circ \Phi^{-1}\circ h'\circ \Phi\circ g$ and $d(k(x), x) < \delta$, for each $x\in X$. Therefore $d(k\Phi^{-1}(y), \Phi^{-1}(y)) < \delta$, for each $y\in Y$ implying that $d(\Phi k\Phi^{-1}(y), y) < \epsilon$, for each $y\in Y$. Also, $f'\circ \Phi\circ k\circ \Phi^{-1} = \Phi\circ f\circ k\circ \Phi^{-1} = \Phi\circ k\circ \Phi^{-1}\circ h'\circ \Phi\circ g\circ \Phi^{-1} = (\Phi\circ k\circ \Phi^{-1})\circ h'\circ g'$ and hence $(f', g')$ is paired topologically stable. Moreover, if $g\circ k = k\circ g$, then $g'\circ \Phi \circ k\circ \Phi^{-1} = \Phi \circ g \circ k \circ \Phi^{-1} = \Phi \circ k \circ g \circ \Phi^{-1} = \Phi \circ k \circ \Phi^{-1} \circ g'$. Hence if $(f, g)$ is $c$-paired topologically stable, then $(f', g')$ is also $c$-paired topologically stable. 
\end{proof}

Among other results discussed in \cite{WO}, following plays a major role in the theory of topological dynamics. This result studies the connection of expansivity, shadowing and topological stability for homeomorphisms on a compact metric spaces.   

\begin{theorem}[Walters stability theorem] 
Let $f:X\rightarrow X$ be a homeomorphism on a compact metric space $X$. If $f$ is expansive and has shadowing, then $f$ is topologically stable.
\label{T5}    
\end{theorem}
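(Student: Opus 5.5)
The plan is Walters' classical argument. Fix $\epsilon>0$, let $\mathfrak{c}$ be an expansivity constant of $f$, and shrink $\epsilon$ so that $2\epsilon<\mathfrak{c}$. The proof will produce the semiconjugacy $k$ pointwise from shadowing, use expansivity for uniqueness, and then deduce the conjugacy equation and continuity from uniqueness. Choose $\delta>0$ by shadowing of $f$ for this $\epsilon$, with $\delta<1$.

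\emph{Construction of $k$.} Let $h$ be a homeomorphism with $d(f(x),h(x))<\delta$ for all $x\in X$. For each $x\in X$ the sequence $\{h^{n}(x)\}_{n\in\mathbb{Z}}$ is a $\delta$-pseudo orbit of $f$, since $d(f(h^{n}x),h^{n+1}x)<\delta$. By shadowing there is a point $y$ with $d(f^{n}(y),h^{n}(x))\le\epsilon$ for every $n\in\mathbb{Z}$.

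\emph{Uniqueness and the equation.} Suppose $y,y'$ both shadow the same orbit. Then $d(f^{n}y,f^{n}y')\le 2\epsilon<\mathfrak{c}$ for all $n$, so expansivity gives $y=y'$. We may therefore define $k(x)$ to be this unique point. Taking $n=0$ gives $d(k(x),x)\le\epsilon$. The point $f(k(x))$ shadows the shifted orbit $\{h^{n+1}(x)\}_{n\in\mathbb{Z}}$, which is the orbit of $h(x)$. By uniqueness, $f\circ k=k\circ h$.

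\emph{Continuity (expected main obstacle).} The approach is by compactness. Suppose $x_{j}\to x$. Pass to a subsequence along which $k(x_{j})\to z$; such a subsequence exists because $X$ is compact. For each fixed $n$, $h^{n}$ and $f^{n}$ are continuous, so letting $j\to\infty$ in
\[
d(f^{n}k(x_{j}),h^{n}x_{j})\le\epsilon
\]
gives $d(f^{n}(z),h^{n}(x))\le\epsilon$ for all $n$. By uniqueness, $z=k(x)$. Every subsequence therefore has a further subsequence converging to $k(x)$, so $k$ is continuous.

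The delicate points are to use non-strict inequalities, so that the limit argument closes, and to take $2\epsilon<\mathfrak{c}$ strictly, so that expansivity applies. With these choices $k$ is a continuous map with $f\circ k=k\circ h$ and $d(k(x),x)<\epsilon'$ for any prescribed $\epsilon'>\epsilon$ (replace $\epsilon$ by a smaller value at the start). Hence $f$ is topologically stable.
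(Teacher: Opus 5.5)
Your proof is correct. It differs from the paper's argument only in how continuity of $k$ is proved.

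The paper does not prove this statement itself; it quotes it from Walters. Its Theorem \ref{T6} with $g=Id$ reduces to it, and there $k$ and the identity $f\circ k=k\circ h$ are obtained exactly as you obtain them. That is, $k(x)$ is the unique tracing point of $\lbrace h^{n}(x)\rbrace_{n\in\mathbb{Z}}$, and the identity follows because $fk(x)$ and $kh(x)$ trace the same pseudo orbit, so their orbits stay within $2\epsilon<\mathfrak{c}$.

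\textbf{The paper's route to continuity.} It follows Walters and first proves Lemma \ref{L1}(i): for each $x_0$ and $\lambda>0$ there is an $N$ such that $d(f^{n}(x_0),f^{n}(x))\le\mathfrak{c}$ for all $|n|\le N$ forces $d(x_0,x)<\lambda$. It then combines three facts:
\begin{enumerate}
\item[(a)] the relation $f^{n}k=kh^{n}$;
\item[(b)] the bound $d(k(z),z)\le\epsilon$ for all $z$;
\item[(c)] continuity of the finitely many maps $h^{n}$, $|n|\le N$.
\end{enumerate}
These give $d(f^{n}k(x_0),f^{n}k(y))<2\epsilon+\frac{\mathfrak{c}}{3}<\mathfrak{c}$ for $|n|\le N$ whenever $y$ is close to $x_0$. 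The lemma then yields $d(k(x_0),k(y))<\lambda$.

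\textbf{Your route.} You apply compactness directly to the shadowing points and finish with uniqueness. This is shorter and bypasses the auxiliary lemma. The compactness step you use is essentially the one hidden inside the proof of Lemma \ref{L1}(i).

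\textbf{What each buys.} Your argument needs the tracing condition to be closed under limits, i.e.\ non-strict inequalities, as you correctly noted. The paper's route works with strict inequalities, as in its $<\alpha$ tracing. It also gives a more explicit modulus of continuity: $\eta$ is determined by $N$ and finitely many iterates of $h$. On a compact space $N$ can be chosen independently of $x_0$, which gives uniform continuity of $k$ directly.

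Both arguments carry over to the relatively compact setting of Theorem \ref{T6}. In your case this is because $d(k(x_j),x_j)\le\epsilon$ and $x_j\to x$ keep the points $k(x_j)$ inside a fixed closed ball around $x$, which is compact.
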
      

The validity of above result in different formulation of a dynamical system has been investigated in 
\cite{AKLM, ALLP, ART, CLT, DDS, DLRWS, DKYG, KDA, KDDP1, KLMP, LMT, LNYT, MMTT, TDT}.
We prove the above stability theorem for paired dynamical systems. We first need the following lemma. 

\begin{Lemma}
Let $(f, g)$ be a paired system on a relatively compact metric space $X$ with PSP and $f$ be expansive with expansivity constant $\mathfrak{c}$. Then the following statements are true:
\begin{enumerate}
\item[(i)] For any $x_{0}\in X$ and $\lambda >0$, there exists an $N > 0$  such that if $x\in X$ satisfies $d(x_{0}, x) \geq \lambda$, then $d(f^{n}(x_{0}), f^{n}(x)) > \mathfrak{c}$, for some $|n| \leq N$.
\item[(ii)] Choose $0 < \epsilon< \frac{\mathfrak{c}}{3}$ and $\delta > 0$ correspond to $\epsilon$ by paired shadowing property. Then each $\delta$-pseudo orbit of $(f, g)$ can be $\epsilon$-traced by a unique point of $X$.
\end{enumerate}
\label{L1}
\end{Lemma}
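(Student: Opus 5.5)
For (i), I would argue by contradiction using compactness of closed balls, after first reducing to a bounded set of points $x$. Fix $x_{0}\in X$ and $\lambda>0$. If $d(x_{0},x)>\mathfrak{c}$, then $n=0$ already gives $d(f^{0}(x_{0}),f^{0}(x))>\mathfrak{c}$. So it suffices to find a single $N$ that works for all $x$ in
\[
K=\lbrace x\in X \mid \lambda\leq d(x_{0},x)\leq \mathfrak{c}\rbrace .
\]
(If $\lambda>\mathfrak{c}$, then $K$ is empty and any $N$ works.) The set $K$ is closed and contained in $B[x_{0},\mathfrak{c}]$. Since $X$ is relatively compact, $B[x_{0},\mathfrak{c}]$ is compact, and hence so is $K$.

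Suppose no such $N$ exists. Then for each $N\in\mathbb{N}$ there is an $x_{N}\in K$ with $d(f^{n}(x_{0}),f^{n}(x_{N}))\leq\mathfrak{c}$ for all $|n|\leq N$. By compactness of $K$, a subsequence $x_{N_{j}}$ converges to some $y\in K$; in particular $d(x_{0},y)\geq\lambda$, so $y\neq x_{0}$. Fix any $n\in\mathbb{Z}$. For all large $j$ we have $N_{j}\geq|n|$, so $d(f^{n}(x_{0}),f^{n}(x_{N_{j}}))\leq\mathfrak{c}$. Continuity of $f^{n}$ then gives $d(f^{n}(x_{0}),f^{n}(y))\leq\mathfrak{c}$. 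As $n$ was arbitrary, this holds for every $n\in\mathbb{Z}$, which contradicts expansivity of $f$ with constant $\mathfrak{c}$ applied to the distinct points $x_{0}$ and $y$. The only delicate point is this reduction to the compact set $K$: without it, the relative compactness hypothesis gives no control over unbounded sequences.

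For (ii), existence of a tracing point is exactly the PSP choice of $\delta$, so only uniqueness needs proof. Let $\lbrace x_{n}\rbrace_{n\in\mathbb{Z}}$ be a $\delta$-pseudo orbit of $(f,g)$, and suppose $x,y$ both $\epsilon$-trace it. By the triangle inequality, for each $n\in\mathbb{Z}$,
\[
d(f^{n}(x),f^{n}(y))\leq d(f^{n}(x),g^{n}(x_{n}))+d(g^{n}(x_{n}),f^{n}(y))\leq 2\epsilon<\mathfrak{c}.
\]
Expansivity of $f$ then forces $x=y$. Note that no commutativity or equicontinuity of $g$ is needed here, because the comparison point $g^{n}(x_{n})$ cancels out in the triangle inequality.
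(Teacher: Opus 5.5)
Your proposal is correct and follows the paper's proof. In (i) the paper also argues by contradiction: its sequence $x_{N}$ lies in the compact ball $B[x_{0},\mathfrak{c}]$ because of the $n=0$ condition, which is exactly your reduction to $K$ made explicit, and in (ii) it uses the same triangle-inequality bound $2\epsilon<\mathfrak{c}$ followed by expansivity of $f$.
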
 
\begin{proof}
Let $(f, g)$ be a paired system with PSP and $f$ be expansive with expansivity constant $\mathfrak{c}$. 
\begin{enumerate}
\item[(i)] On the contrary, choose a sequence $\lbrace x_{N}\rbrace_{N\in \mathbb{N}}$ in $X$ such that $d(f^{n}(x_{0}), f^{n}(x_{N}))$ $\leq \mathfrak{c}$, for all $|n| \leq N$ and $d(x_{0}, x_{N}) \geq \lambda$. Since $B[x_{0}, \mathfrak{c}]$ is compact, we can assume that $x_{N}$ converges to $x$, for some $x\in X$. But then $d(f^{n}(x_{0}), f^{n}(x))) \leq \mathfrak{c}$, for each $n \in \mathbb{Z}$ and $d(x_{0}, x) \geq \lambda$, a contradiction to the expansivity of $f$.
\item[(ii)] Let $\lbrace x_{n}\rbrace_{n\in\mathbb{Z}}$ be a $\delta$-pseudo orbit of $(f, g)$ and $x,y\in X$ be two points that $\epsilon$-traced this orbit. 
Then $d(f^{n}(x), f^{n}(y)) \leq d(f^{n}(x), g^{n}(x_{n})) + d(g^{n}(x_{n}), f^{n}(y)) < 2\epsilon < \mathfrak{c}$, for each $n\in \mathbb{Z}$. By the expansivity of $f$, we get that $x = y$. 
\end{enumerate}
\end{proof}

\begin{theorem}
Let $(f,g)$ be a paired system on a relatively compact metric space $X$. If $(f,g)$ has PSP and $f$ is expansive with expansivity constant $\mathfrak{c}$, then $(f,g)$ is paired topologically stable. Further, if $(f, g)$ is commutative, then $(f, g)$ is $c$-paired topologically stable. Moreover, for each $0< \epsilon < \frac{\mathfrak{c}}{3}$, there exists a $\delta > 0$ such that for each $h\in UE_{(f,g)}^{\delta}(X)$, there exists a unique continuous map $k : X\rightarrow X$ satisfying $f\circ k = k\circ g\circ h$ and $d(k(x), x)\leq \epsilon$, for each $x\in X$. In addition, if $h$ is expansive with expansivity constant $\mathfrak{c'}\geq 3\epsilon$, then the conjugating map $k$ is injective on the set of all fixed points of $g$ i.e. $k|_{Fix(g)} : Fix(g)\rightarrow X$ defined as $k|_{Fix(g)}(x) = k(x)$, for each $x\in Fix(g) = \lbrace x \mid g(x) = x\rbrace$, is injective. 
\label{T6}
\end{theorem}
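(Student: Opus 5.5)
Given $0<\epsilon<\frac{\mathfrak{c}}{3}$, we choose $\delta>0$ for $\epsilon$ by the paired shadowing property; we may also take $\delta<1$, so that $\overline{d}(f,gh)<\delta$ gives $d(f(x),gh(x))<\delta$. We fix $h\in UE^{\delta}_{(f,g)}(X)$. We then build $k$ as in Walters' argument. For $x\in X$ we look for a $\delta$-pseudo orbit of $(f,g)$ whose $n$-th term, after applying $g^{n}$, follows the $gh$-orbit of $x$. The natural candidate is $x_n=g^{-n}(gh)^n(x)=h^n(x)$, using $gh=hg$. Then
\[
d(f(x_n),g(x_{n+1}))=d(f(h^n x),gh(h^n x))<\delta
\]
for every $n$. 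By Lemma \ref{L1}(ii) there is a unique point $k(x)$ with $d(f^n(k(x)),g^n(h^n(x)))\le\epsilon$ for all $n$. At $n=0$ this gives $d(k(x),x)\le\epsilon$. This is the only natural choice, and it is also where the uniqueness statement will come from.

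For the conjugacy, we note that $\{h^{n+1}(x)\}_n$ is the pseudo orbit attached to $h(x)$. Since $\{x_n\}$ is $\epsilon$-traced by $k(x)$, applying $f$ gives
\[
d(f^{n}(f k(x)),g^{n}(g h^{n+1}(x)))\le\epsilon .
\]
So $fk(x)$ traces the orbit $\{gh^{n+1}(x)\}=\{h^{n}(gh(x))\}$, which is the pseudo orbit attached to $gh(x)$. Uniqueness then gives $fk=k\circ g\circ h$. For the $c$-stability part we assume $fg=gf$. Then $g k(x)$ traces $\{g h^{n}(x)\}=\{h^n(g x)\}$ via $f^n g=g f^n$, with an $\epsilon$-closeness that uses the PSP bound. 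We would rather argue that $g^{-1}$ of an $\epsilon$-trace is a trace directly, checking with the commutation $f^ng=gf^n$ that the distances agree after applying $g$. If uniformity is lost here, we shrink $\epsilon$ via uniform continuity of $g$ and use expansivity with the $\mathfrak{c}/3$ room. Uniqueness then yields $gk=kg$.

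Continuity of $k$ is the main obstacle, and we handle it with Lemma \ref{L1}(i). Fix $x$ and $\lambda>0$, and take $N$ for $x_0=k(x)$. By uniform continuity of $f,g,h$ and their inverses, if $y$ is close enough to $x$ then $d(g^nh^n(x),g^nh^n(y))$ is small for $|n|\le N$. Hence $d(f^n k(x),f^n k(y))\le 2\epsilon+\text{small}\le\mathfrak{c}$ for $|n|\le N$, which forces $d(k(x),k(y))<\lambda$. The delicate point is to keep $2\epsilon$ plus the error below $\mathfrak{c}$; this is exactly what $\epsilon<\mathfrak{c}/3$ guarantees. Uniqueness of $k$ follows because any $k'$ with $fk'=k'gh$ and $d(k',\mathrm{Id})\le\epsilon$ satisfies $f^nk'(x)=k'(gh)^n(x)$. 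Thus $k'(x)$ $\epsilon$-traces $\{h^n x\}$, and Lemma \ref{L1}(ii) gives $k'=k$.

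Finally, suppose $h$ is expansive with constant $\mathfrak{c}'\ge3\epsilon$, and let $x,y\in Fix(g)$ with $k(x)=k(y)$. Then $g^n h^n(x)=h^n(x)$, and similarly for $y$. So $d(h^n x,h^n y)\le 2\epsilon<\mathfrak{c}'$ for all $n$, and expansivity of $h$ gives $x=y$.
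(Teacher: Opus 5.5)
Your construction, conjugacy, continuity, uniqueness and $Fix(g)$-injectivity steps are essentially the paper's argument. You define $k(x)$ as the unique tracer of the $\delta$-pseudo orbit $\{h^{n}(x)\}_{n\in\mathbb{Z}}$, get $fk=kgh$ and uniqueness of $k$ from uniqueness of tracers (the paper applies expansivity of $f$ directly, which amounts to the same thing), and prove continuity via Lemma \ref{L1}(i) with the $2\epsilon+\mathfrak{c}/3<\mathfrak{c}$ budget. Your injectivity argument on $Fix(g)$ uses $g^{n}h^{n}(x)=h^{n}(x)$, as the paper does.

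The step that fails as written is $gk=kg$. You take $\delta$ from PSP at level $\epsilon$, so you only know $d(f^{n}k(x),g^{n}h^{n}(x))\le\epsilon$. Therefore
\[
d\bigl(f^{n}(gk(x)),\,g^{n}h^{n}(g(x))\bigr)=d\bigl(g f^{n}k(x),\,g\,g^{n}h^{n}(x)\bigr)
\]
is bounded only by the modulus of continuity of $g$ at $\epsilon$, and that can exceed $\mathfrak{c}-\epsilon$. Since $g$ is not an isometry, the distances do not ``agree after applying $g$'', and neither uniqueness of tracers nor expansivity applies.

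Your fallback is the right fix, but it has to be built into the choice of $\delta$ before $h$ is fixed, as the paper does. Shrink the tracing level, not $\epsilon$. Pick $0<\alpha<\epsilon/2$ with $d(u,v)\le\alpha\Rightarrow d(g(u),g(v))<\epsilon/2$, and let $\delta<\min\{\alpha,1\}$ be the PSP constant for $\alpha$. Then $k(x)$, the $\alpha$-tracer, is still the unique $\epsilon$-tracer because $2\epsilon<\mathfrak{c}$, so the ``moreover'' uniqueness claim is unaffected. Moreover, $d(f^{n}gk(x),f^{n}k(g(x)))<\epsilon/2+\alpha<\mathfrak{c}$ for all $n$, and expansivity of $f$ gives $gk=kg$.
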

\begin{proof}
For $0 < \epsilon < \frac{\mathfrak{c}}{3}$, choose $0 < \alpha < \frac{\epsilon}{2}$ such that if $x, y\in X$ satisfy $d(x, y) < \alpha$, then $d(g(x), g(y)) < \frac{\epsilon}{2}$. Choose $0 < \delta < \alpha$ such that each $\delta$-pseudo orbit of $(f, g)$ can be $\alpha$-traced and hence $\frac{\epsilon}{2}$-traced through $(f, g)$ by a unique point of $X$. Choose $h\in UE_{(f,g)}^{\delta}(X)$. 
Clearly, $\lbrace h^{n}(x)\rbrace_{n\in \mathbb{Z}}$ forms a $\delta$-pseudo orbit of $(f, g)$, for each $x\in X$. 
We use Lemma \ref{L1}(ii) to define $k : X\rightarrow X$ where $k(x)$ is the unique $\alpha$-tracing point of the $\delta$-pseudo orbit $\lbrace h^{n}(x)\rbrace_{n\in \mathbb{Z}}$ i.e. $d(f^{n}(k(x)), g^{n}h^{n}(x)) < \alpha$, for each $n\in \mathbb{Z}$ and for each $x\in X$. 
In particular, if $n = 0$, then $d(k(x), x) < \alpha < \epsilon$, for each $x\in X$. 

Note that $d(f^{n+1}k(x), g^{n+1}h^{n+1}(x))< \alpha$, for each $n\in \mathbb{Z}$ and for each $x\in X$, and $d(f^{n}k(gh(x)), g^{n+1}h^{n+1}(x)) = d(f^{n}(k(gh(x))), g^{n}h^{n}gh(x)) < \alpha$, for each $n\in \mathbb{Z}$ and for each $x\in X$. 
Therefore $d(f^{n}(fk(x)), f^{n}(kgh(x))) < 2\alpha < \epsilon < \mathfrak{c}$, for each $n\in \mathbb{Z}$ and for each $x\in X$. 
Since $f$ is expansive, we get that $f\circ k = k\circ g\circ h$. 
Note that if $(f, g)$ is commutative also, then for each $n\in \mathbb{Z}$, we have
\begin{align*}
d(f^{n}(g\circ k(x)), f^{n}(k\circ g(x))) &= d(gf^{n}(k(x)), f^{n}(kg(x))) \\
&\leq d(gf^{n}(k(x)), gg^{n}h^{n}(x)) + d(gg^{n}h^{n}(x), f^{n}(kg(x))) \\
&= d(gf^{n}(k(x)), gg^{n}h^{n}(x)) + d(g^{n}h^{n}(g(x)), f^{n}(k(g(x)))) \\
&\leq \frac{\epsilon}{2} + \alpha \\
&< \epsilon < \mathfrak{c}
\end{align*}
Again by the expansivity of $f$, we get that $g\circ k = k\circ g$.

We now show that $k$ is continuous at each point $x_{0}\in X$. Choose a $\lambda > 0$. From Lemma \ref{L1}(i), we choose an $N > 0$ such that if $y\in X$ satisfies $d(f^{n}k(x_{0}), f^{n}k(y)) \leq \mathfrak{c}$, for all $|n|\leq N$, then $d(k(x_{0}), k(y))  < \lambda$. 
Choose an $\eta > 0$ such that if $y\in X$ satisfies $d(x_{0}, y) < \eta$, then $d(g^{n}h^{n}(x_{0}), g^{n}h^{n}(y)) < \frac{\mathfrak{c}}{3}$, for all $|n| \leq N$. 
Therefore if $y\in X$ satisfies $d(x_{0}, y) < \eta$, then for all $|n| \leq N$ and for each $y\in X$ we have 
\begin{align*}
d(f^{n}k(x_{0}), f^{n}k(y)) &= d(kg^{n}h^{n}(x_{0}), kg^{n}h^{n}(y)) \\
&\leq d(kg^{n}h^{n}(x_{0}), g^{n}h^{n}(x_{0})) + d(g^{n}h^{n}(x_{0}), g^{n}h^{n}(y)) \\
&\hspace*{0.2cm} + d(g^{n}h^{n}(y), kg^{n}h^{n}(y)) \\
&< \epsilon + \frac{\mathfrak{c}}{3} + \epsilon \\
&< \mathfrak{c}.
\end{align*}

Therefore $d(k(x_{0}), k(y)) < \lambda$ whenever $y\in X$ satisfies $d(x_{0}, y) < \eta$ implying that $k$ is continuous at $x_{0}$. Since $x_{0}$ is chosen arbitrarily, we get that $k$ is a continuous map.

Assume that there exists an another continuous map $k'$ satisfying $f\circ k' = k'\circ g\circ h$ and $d(k'(x), x) < \epsilon$, for each $x\in X$. 
Therefore for each $n\in \mathbb{Z}$ and for each $x\in X$, we have
\begin{align*}
d(f^{n}k(x), f^{n}k'(x)) &\leq d(f^{n}k(x), g^{n}h^{n}(x)) + d(g^{n}h^{n}(x), f^{n}k'(x)) \\
&= d(kg^{n}h^{n}(x), g^{n}h^{n}(x)) + d(g^{n}h^{n}(x), k'g^{n}h^{n}(x)) \\
&< 2\epsilon < \mathfrak{c}.
\end{align*}
Since $f$ is expansive, we get that $k(x) = k'(x)$, for each $x\in X$. Now, assume that $x, y$ are fixed points under $g$, $h$ is expansive, $k(x) =k(y)$ and $n\in \mathbb{Z}$. Note that 
\begin{align*}
d(h^{n}(x), h^{n}(y)) &\leq d(h^{n}(x), k(h^{n}(x))) + d(kh^{n}(x), kg^{n}h^{n}(x)) \\ 
&\hspace*{0.4cm} + d(kg^{n}h^{n}(x), kg^{n}h^{n}(y)) + d(kg^{n}h^{n}(y), kh^{n}(y)) \\
&\hspace*{0.4cm} + d(k(h^{n}(y)), h^{n}(y)) \\
&< \epsilon + 0 + 0 +0 + \epsilon \\
&= 2\epsilon < \mathfrak{c'}.
\end{align*}
Since $h$ is expansive, we get that $x = y$. 
\end{proof}

\begin{Example}
Under the assumptions of Example \ref{E2}, we define $h$ on $X$ by $h(\lbrace x_{i}\rbrace) = \lbrace x'_{i}\rbrace$, where $x'_{i} = (x_{i} + 1)$ $mod$ $2$, for each $i\in \mathbb{Z}$. Since $(f, h)$ is commutative and $h^{2} = Id$, we get that $(f, h)$ has PSP if and only if $h^{-1}f$ has shadowing if and only if $h^{-2}f^{2} = f^{2}$ has shadowing if and only if $f$ has shadowing. Therefore $(f, h)$ is a commutative paired system with paired shadowing property. Since $f$ is expansive, from Theorem \ref{T6} we get that $(f, h)$ is $c$-paired topologically stable.
\label{E4}
\end{Example}

\textbf{Questions :-} The following questions are obvious and still needs to be answered:
\begin{enumerate} 
\item Does there exist a paired system $(f, g)$ such that $(f, g)$ has PSP but $g^{-1}f$ does not have the shadowing?
\item Is the topological stability of $g^{-1}f$ is equivalent to the paired topological stability of $(f,g)$, provided $(f, g)$ is commutative and $g$ is equicontinuous? 
\end{enumerate} 


\end{document}